\numberwithin{equation}{section} 
\newcommand{\btheta}{{\bar \theta}}
\renewcommand\d{\partial}
\def\Trace{\rm Trace }
\def\eps{\varepsilon }
\newcommand{\E}{{\mathcal {E}}}
\newcommand{\M}{{\mathcal {M}}}
\renewcommand\d{\partial}
\newcommand\R{\mathbb R}
\def\eps{\varepsilon}
\newcommand\br{\begin{remark}}
\newcommand\er{\end{remark}}
\newcommand\bp{\begin{pmatrix}}
\newcommand\ep{\end{pmatrix}}
\newcommand\be{\begin{equation}}
\newcommand\ee{\end{equation}}
\newcommand\ba{\begin{equation}\begin{aligned}}
\newcommand\ea{\end{aligned}\end{equation}}
\newcommand{\bap}{\begin{app}}
\newcommand{\eap}{\end{app}}
\newcommand{\begs}{\begin{exams}}
\newcommand{\eegs}{\end{exams}}
\newcommand{\beg}{\begin{example}}
\newcommand{\eeg}{\end{exaplem}}
\newcommand{\bpr}{\begin{proposition}}
\newcommand{\epr}{\end{proposition}}
\newcommand{\bt}{\begin{theorem}}
\newcommand{\et}{\end{theorem}}
\newcommand{\bc}{\begin{corollary}}
\newcommand{\ec}{\end{corollary}}
\newcommand{\bl}{\begin{lemma}}
\newcommand{\el}{\end{lemma}}
\newcommand{\bd}{\begin{definition}}
\newcommand{\ed}{\end{definition}}
\newcommand{\brs}{\begin{remarks}}
\newcommand{\ers}{\end{remarks}}
\newtheorem{theo}{Theorem}[section]
\newtheorem{exams}[theo]{Examples}
\numberwithin{equation}{section}
\newcommand{\CalT}{\mathcal{T}}
\newcommand{\RR}{{\mathbb R}}
\newcommand{\CC}{{\mathbb C}}
\newcommand{\Span}{{\rm Span }}
\newtheorem{theorem}{Theorem}[section]
\newtheorem{proposition}[theorem]{Proposition}
\newtheorem{corollary}[theorem]{Corollary}
\newtheorem{lemma}[theorem]{Lemma}
\newtheorem{definition}[theorem]{Definition}
\newtheorem{example}[theorem]{Example}
\newtheorem{remark}[theorem]{Remark}
\title{
High-frequency asymptotics and 1-D stability of ZND detonations
in the small-heat release and high-overdrive limits}
\author{\sc \small 
Kevin Zumbrun\thanks{Indiana University, Bloomington, IN 47405;
kzumbrun@indiana.edu:
Research of K.Z. was partially supported
under NSF grants no. DMS-0300487 and DMS-0801745.
 }}
\begin{document}

\maketitle


\begin{abstract}
We establish one-dimensional spectral, or ``normal modes'', 
stability of ZND detonations in the small heat release limit 
and the related high overdrive limit with heat release and activation 
energy held fixed, verifying numerical observations of Erpenbeck in the 1960s.
The key technical points are a strategic rescaling of parameters 
converting the infinite overdrive limit to a finite, regular 
perturbation problem, and a careful high-frequency analysis 
depending uniformly on model parameters.
The latter recovers the important result of high-frequency 
stability established by Erpenbeck by somewhat different techniques.
Notably, the techniques used here yield quantitative estimates 
well suited for numerical stability investigation.
\end{abstract}


\section{Introduction}
In this note, we establish one-dimensional
spectral stability in the small heat release
and high overdrive limits of ZND detonations, confirming 
numerical observations of Erpenbeck going back to \cite{Er2},
but up to now not rigorously verified.
In the process, we recover by a somewhat different argument
the fundamental result of Erpenbeck
\cite{Er3} that detonations are one-dimensionally stable with
respect to sufficiently high frequencies.

The basic argument for stability in the high overdrive limit,
based on a strategic rescaling of parameters
converting the problem to a small-heat release/small activation
energy/maximal shock strength limit on a bounded parameter
range, was indicated already in \cite{Z1}.
However, the result seems sufficiently fundamental to warrant
an exposition giving full detail.
In particular, the discussion of \cite{Z1} concerned only bounded
frequencies, for which stability follows by a simple continuity argument.
Stability for high frequencies can be concluded
from a well-known result of \cite{Er3}, which, restricted to
the one-dimensional setting, asserts that
instabilities cannot occur outside a sufficiently large ball.
However, the arguments of \cite{Er3}, based on 
semiclassical limit and turning point theory,\footnote{
Specifically, validation of a formal asymptotic expansion in one over the 
frequency for complexified $x$.}
do not readily yield quantitative estimates on rates of convergence
or dependence on parameters.
As continuous dependence on parameters of the radius outside which
instabilities are excluded is crucial for the limiting argument
described above, it seems useful to revisit the high-frequency
limit problem in greater detail.

Moreover, as pointed out in \cite{CJLW}, the 
issue of matching at $x\to -\infty$ of the formal asymptotic
solution with the solution prescribed by the required behavior
at spatial infinity appears to require 
a bit more discussion beyond what
is given in \cite{Er3}, where it is 
concluded simply from the observation that the limits
as $x$ or frequency go to infinity commute.
This argument seems to require either uniform convergence
on all of $x\in (-\infty,0]$ of the formal asymptotic series as frequency
goes to infinity, which (to us) does not appear obvious, or
else uniform estimates independent of frequency
on behavior as $x\to -\infty$.

These aspects (uniform dependence and uniform convergence on $(-\infty,0]$)
of high-frequency behavior
are the main issues addressed here, 
where they are treated
by 
a careful application of the asymptotic ODE
techniques developed in \cite{GZ,MeZ1,MaZ3,PZ,Z1}.
These in turn are natural
outgrowths of the classical asymptotic ODE techniques developed by Coddington, 
Levinson, Coppel, and others, as described in \cite{CL,Co} 
and references therein, including those cited by 
Erpenbeck \cite{Er1,Er2} in describing behavior as $x\to -\infty$.\footnote{
See problem 29, p. 104 of \cite{CL}, cited in \cite{Er1}.}
Our arguments are qualitatively different from the ones of Erpenbeck
based on semiclassical limit/turning point 
theory,
making use at a key point of the exponential convergence of profiles to
a limit as $x\to -\infty$.
Specifically, in the neutral case that diagonal elements have 
uniformly small
spectral gap, 
we apply a variable-coefficient version
(Lemma \ref{varconlem})
of the conjugation lemma of \cite{MeZ1}
to close the argument, extending and
refining the related constant-coefficient gap lemma estimates 
of Proposition 5.7, \cite{CJLW}, applying to a single mode.\footnote{
These estimates, valid on $(-\infty, -C\log |\lambda|]$ for frequency 
$|\lambda|\to \infty$, do not seem sufficient for our purposes.}

Notably, the simple and concrete estimates thus derived 
yield quantitative bounds of potential use for numerical
stability investigations.
Recall \cite{HuZ1} that computational intensity
of numerical stability computations increases 
rapidly with increasing frequency, so that bounds on frequency
are of considerable practical interest for applications.
See \cite{BZ} for a first effort in this direction in
the simplified context of Majda's model \cite{M}.

\section{Equations and assumptions} \label{s:ideal}
The reacting Euler, or Zeldovitch--von Neumman--Doering (ZND)
equations commonly used to model combustion, 
expressed in Lagrangian coordinates, are
\begin{equation}\label{ZND}
\left\{ \begin{aligned}
 \d_t \tau - \d_x u & = 0,\\
 \d_t u + \d_x p  & = 0,\\
 \d_t E + \d_x(pu) & = qk 
\phi(T) 
z ,\\
 \d_t z & = - k 
\phi(T) 
z ,\\
\end{aligned}\right.
\end{equation}
where $\tau>0$ denotes specific volume, $u$ velocity, 
$ E = e+ \frac{1}{2} u^2$ 
specific gas-dynamical energy,
$e>0$ specific internal energy,  
and $0 \leq z \leq 1$ mass fraction of the reactant.
Here, $k > 0$ measures reaction rate and $q$ heat release of the reaction,
with $q>0$ corresponding to an exothermic reaction and $q<0$ to
an endothermic reaction,  
while $T=T(\tau,e,z)>0$ represents temperature and $p=p(\tau,e,z)$ pressure.

The equations are of quasilinear hyperbolic type provided that
(but not only when) 
\be\label{pos}
(p,T)= (p,T)(\tau,e) \quad
\hbox{\rm and }\; p, p_\tau, T, T_e>0.
\ee
For simplicity, we assume throughout this paper
an ideal gas equation of state and Arrhenius-type ignition function,
\be\label{eos}
 p=\Gamma \tau^{-1} e, \quad T=c^{-1} e, 
\quad \phi(T)=e^{-\frac{\mathcal{E}}{T}}
\ee
where $E=e+u^2/2$ is specific (gas-dynamical) energy, 
$c>0$ is the specific heat constant,
$\Gamma>0$ is the Gruneisen constant, 
and $\mathcal{E}\ge 0$ is activation energy.
Our results on small heat-release and high-frequency stability
clearly extend to the general case \eqref{pos}; however, our
main results, on the high-overdrive limit, depend in an essential
way on the invariances associated with \eqref{eos}.

\section{Detonation profiles and parametrization}
A {\it right-going strong detonation wave} is a 
traveling-wave solution
\be\label{profile}
(u,z)(x,t)=(\bar u, \bar z)(x-st), 
\quad
\lim_{x\to  -\infty} (\bar u,\bar z)(x)=(u_-, 0),
\quad
(\bar u,\bar z)(x)\equiv (u_+, 1) \; \hbox{\rm for}\; x\ge 0
\ee
of \eqref{ZND} with speed $s>0$
connecting a burned state on the left to an unburned state on the right,
smooth for $x\le 0$, with a Lax-type gas-dynamical shock at $x=0$,
known as the {\it Neumann shock}.

Rescaling
$$
\begin{aligned}
(x,t,s, \tau,u,T) &\to 
\Big(\frac{\tau_+ sx}{L}, \frac{\tau_+ s^2t}{L}, 1,
\frac{\tau}{\tau_+}, \frac{u}{\tau_+  s}, 
\frac{T}{\tau_+^2 s^2}\Big), 
\quad
(z,q,k,\mathcal{E})\to 
\Big( z, \frac{q}{\tau_+^2 s^2}, \frac{Lk}{\tau_+s^2} ,
\frac{\mathcal{E}}{\tau_+^2 s^2}\Big) \\
\end{aligned}
$$
following \cite{Z1}, 
we may take without loss of generality $s=1$, $\tau_+=1$
and (by translation invariance in $u$), $u_+=0$,
leaving $e_+>0$ as the parameter determining the Neumann
shock.

By explicit computation (\cite{Z1}, Appendix C), we have then 
\be\label{idealzndprof}
\bar u= 1-\bar \tau,
\quad
\bar e= 
\frac{\bar \tau(\Gamma e_++1-\bar \tau)}{\Gamma},
\ee
\ba
\bar \tau&= 
\frac{
(\Gamma+1) (\Gamma e_++1)-
\sqrt{ 
(\Gamma+1)^2 (\Gamma e_++1)^2
- \Gamma (\Gamma +2) ( 1+2(\Gamma +1)e_+ -2q(\bar z-1))  }
}
{\Gamma +2} ,\\
\ea
where
\ba\label{cjlim}
0\le q\le q_{cj} & :=
\frac{ (\Gamma+1)^2(\Gamma e_+ + 1)^2- \Gamma (\Gamma+2) ( 1+2(\Gamma +1)e_+ ) }
{2\Gamma(\Gamma+2)},
\ea
and
$\bar z'=k\phi(c^{-1}\bar e(\bar z))\bar z$;
in the simplest case $\mathcal{E}=0$,
$\bar z=e^{kx}$.

The jump at the preceding ``Neumann shock'' at $x=0$ is given
(see \cite{Z1}, App. C) by
\be\label{jump}
[\bar W]:=
\Big( 
1-\bar \tau(0), \bar \tau(0)-1,  e_+-\bar e(0), 0 \Big)^T.
\ee

Taking finally $k=1$ by a simultaneous rescaling of $x$ and $t$
if necessary, we can parametrize all possible ZND profiles by 
\be\label{param}
(e_+, q, \mathcal{E}, \Gamma), 
\ee
where 
$0\le e_+ \le \frac{1}{\Gamma(\Gamma+1)}$,
$0\le q\le q_{cj}(e_+)$,
$0\le \mathcal{E} < \infty$, and $0<\Gamma<\infty$.
See \cite{Z1} for further details.

\subsection{The high-overdrive limit and the scaling of Erpenbeck}\label{s:f}

A similar scaling was used by Erpenbeck in
\cite{Er3}, but with $e_+$ held fixed instead of wave speed $s$.
Converting from Erpenbeck's to our scaling amounts to rescaling the
wave speed, so that $T\to T/s^2$ and $\mathcal{E}\to 
\mathcal{E}/s^2$, and $t\to ts^2$
($u$ is translation invariant, so irrelevant).
Thus, as noted in \cite{Z1},
the high-overdrive limit discussed in \cite{Er3}, in which
$s\to \infty$ with $u_+$ held fixed, corresponds in our scaling
to taking $\mathcal{E}= \mathcal{E}_0 e_+$, 
$q= q_0 e_+$, and varying $e_+$ from $e_+=e_{cj}(q_0) $
 ($s$ minimum) to $ 0$ ($s=\infty$), where $e_{cj} $
is determined implicitly by the relation $q_{cj}(e_{cj})=q_0 e_{cj}$:
that is, the simultaneous {\it zero heat release},
{\it zero activation energy}, and {\it strong shock limit}
$e_+\to 0$, 
$\mathcal{E}\to 0$,
and $q\to0$.
 
\section{Asymptotic ODE theory}\label{asymptotic}

\subsection{The conjugation lemma}\label{s:conj}
Consider a general first-order system
$W'=A^p(x,\lambda)W$, $W\in \CC^N$, $\lambda\in \CC$, $x\in \RR$,
with asymptotic limit $A^p_-$ as $x\to - \infty$,
where $p\in \RR^m$ denote model parameters and $'$ $d/dx$.

\bl [\cite{MeZ1,PZ}]\label{conjlem}
Suppose for fixed $\theta>0$ and $C>0$ that 
\be\label{udecay}
|A^p-A^p_-|(x,\lambda)\le Ce^{-\theta |x|}
\ee
for $x\le 0$ uniformly for $(\lambda,p)$ in a neighborhood of 
$(\lambda_0, p_0)$ and that $A$ varies analytically in $\lambda$ 
and continuously in $p$ as a function into $L^\infty(x)$.
Then, there exists in a neighborhood of $(\lambda_0,p_0)$ 
an invertible linear transformation
$P^p(x,\lambda) =I+\Psi^p(x,\lambda)$ defined
on $x\le 0$, analytic in $\lambda$ and continuous in $p$ 
as a function into $L^\infty [0,\pm\infty)$, such that
\begin{equation}
\label{Pdecay} 
| \Psi^p |\le C_1 e^{-\bar \theta |x|}
\quad
\text{\rm for } x\le 0,
\end{equation}
for any $0<\btheta<\theta$, some $C_1=C_1(\bar \theta, \theta)>0$,
and the change of coordinates $W=:P^p Z$ reduces 
$W'=A^pW$
to the constant-coefficient limiting system
$Z'=A^p_- Z $
for  $x\le 0$.
\el

\begin{proof} See the proof of Lemma 2.5, \cite{Z4}, or Lemma A.1, \cite{Z1}.
\end{proof}

\subsection{The tracking lemma}\label{s:track}

Consider an approximately block-diagonal system
\begin{equation}
W'= \bp M_1 & 0 \\ 0 & M_2 \ep(x,p) W + \delta(x,p) \Theta(x,p) W,
\label{blockdiag}
\end{equation}
where $\Theta$ is a uniformly bounded matrix, $\delta(x)$ scalar, 
and $p$ a vector of parameters,
satisfying a pointwise spectral gap condition
\begin{equation} \label{gap}
\min \sigma(\Re M_1^p)- \max \sigma(\Re M_2^p)
\ge \eta(x) >0
\, \text{\rm for all } x.
\end{equation}
(Here as usual $\Re N:= \frac{1}{2}(N+N^*)$ denotes the
``real'', or symmetric part of $N$.)

\begin{lemma}[\cite{MaZ3,PZ,Z1}] \label{reduction}
Consider a system \eqref{blockdiag} under the gap assumption
\eqref{gap}, with $\Theta^p$ uniformly bounded and
$\eta\in L^1_{\rm loc}$.
If $\sup (\delta/\eta)(x)$ is sufficiently small,
then there exists a unique linear
transformation $\Phi(x,p)$,
possessing the same regularity with respect to $p$ 
as do coefficients $M_j$ and $\delta\Theta$ 
(as functions into $L^\infty(x)$), 
for which the graph $\{(Z_1, \Phi Z_1)\}$ is invariant under 
\eqref{blockdiag}, and 
\ba\label{ptwise}
|\Phi^p(x)| \le C
\int_{-\infty}^{x} e^{\int_y^x -\eta(z)dz} \delta(y)dy
\le \sup_{(-\infty,x]} (\delta/\eta).
\ea
\end{lemma}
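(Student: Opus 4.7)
The plan is to recast the invariance condition on the graph $\{(Z_1,\Phi Z_1)\}$ as a matrix Riccati equation for $\Phi$ and to solve it by a Banach fixed-point argument in an appropriate weighted sup-norm space. Writing $W=(Z_1,Z_2)^T$ with $Z_2=\Phi Z_1$ and differentiating along \eqref{blockdiag}, the invariance requirement becomes
\begin{equation*}
\Phi' \;=\; M_2\Phi - \Phi M_1 \,+\, \delta\bigl(\Theta_{21} + \Theta_{22}\Phi - \Phi\Theta_{11} - \Phi\Theta_{12}\Phi\bigr),
\end{equation*}
where $\Theta_{ij}$ denote the blocks of $\Theta$ conformal with the block structure of the leading coefficient. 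Consistent with the desired bound \eqref{ptwise}, I seek the solution of this Riccati equation vanishing as $x\to-\infty$, and recast it by Duhamel as
\begin{equation*}
\Phi(x) \;=\; \int_{-\infty}^x \mathcal{S}(x,y)\Bigl[\,\delta(y)\bigl(\Theta_{21} + \Theta_{22}\Phi - \Phi\Theta_{11} - \Phi\Theta_{12}\Phi\bigr)(y)\,\Bigr]\,dy,
\end{equation*}
where $\mathcal{S}(x,y):X\mapsto F_2(x,y)\,X\,F_1(y,x)$ is the propagator of the homogeneous Sylvester equation $\Psi'=M_2\Psi-\Psi M_1$, with $F_j$ the fundamental solution of $Z_j'=M_j Z_j$.

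The first main step is a pointwise propagator estimate. A symmetric-part energy computation on the Frobenius norm of $\Psi(x) = \mathcal{S}(x,y)X$ gives
\begin{equation*}
\tfrac{d}{dx}\Trace(\Psi^*\Psi) \;=\; 2\,\Trace\bigl(\Psi^*(\Re M_2)\Psi\bigr) - 2\,\Trace\bigl(\Psi^*\Psi\,(\Re M_1)\bigr) \;\le\; -2\eta(x)\,\Trace(\Psi^*\Psi),
\end{equation*}
where the inequality uses \eqref{gap} together with $\Trace(\Psi^*(\Re M_2)\Psi) \le (\max\sigma\,\Re M_2)\Trace(\Psi^*\Psi)$ and the analogous lower bound involving $\Re M_1$. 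This yields $|\mathcal{S}(x,y)|\le \exp\bigl(-\int_y^x\eta(z)\,dz\bigr)$ for $y\le x$. Combined with the uniform bound on $\Theta$, the integral operator $\mathcal{T}$ on the right-hand side of the Duhamel representation then satisfies
\begin{equation*}
\bigl|\mathcal{T}(\Phi)(x)\bigr| \;\le\; C\int_{-\infty}^x e^{-\int_y^x\eta(z)\,dz}\,\delta(y)\bigl(1 + |\Phi(y)| + |\Phi(y)|^2\bigr)\,dy.
\end{equation*}

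The second main step is the contraction. Using $\frac{d}{dy}e^{-\int_y^x\eta(z)\,dz} = \eta(y)\,e^{-\int_y^x\eta(z)\,dz}$ and integrating, one obtains
\begin{equation*}
\int_{-\infty}^x e^{-\int_y^x\eta(z)\,dz}\,\delta(y)\,dy \;=\; \int_{-\infty}^x e^{-\int_y^x\eta(z)\,dz}\,\frac{\delta(y)}{\eta(y)}\,\eta(y)\,dy \;\le\; \sup_{(-\infty,x]}(\delta/\eta),
\end{equation*}
so that if $\epsilon:=\sup(\delta/\eta)$ is sufficiently small, $\mathcal{T}$ maps the closed ball $\{\|\Phi\|_\infty \le 2C\epsilon\}$ into itself. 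A parallel estimate for $\mathcal{T}(\Phi)-\mathcal{T}(\tilde\Phi)$ --- the linear and quadratic in $\Phi$ nonlinearities contribute a combined Lipschitz constant of $O(\epsilon)$ --- shows $\mathcal{T}$ is a contraction, yielding the unique fixed point $\Phi$. Substituting this fixed point back into the integral bound of step one and invoking the supremum estimate just displayed gives \eqref{ptwise}. Regularity (continuity, analyticity) in the parameter $p$ passes from $M_j$, $\delta$, and $\Theta$ to $\Phi$ because the iteration $\Phi_{n+1}=\mathcal{T}(\Phi_n)$ starting from $\Phi_0=0$ preserves such regularity as maps into $L^\infty(x)$, and the uniform convergence carries the regularity to the limit.

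The main obstacle is the Sylvester propagator bound of the first step: once the pointwise (not merely spectral) estimate $|\mathcal{S}(x,y)|\le \exp(-\int_y^x\eta)$ is in hand, the rest is a routine small-parameter contraction argument. The pointwise nature of the bound is essential --- it is exactly what allows the variable gap $\eta(x)$ to enter \eqref{ptwise} in integrated form rather than through a uniform lower bound, a feature needed to apply the lemma in the neutral regime where the gap may collapse pointwise.
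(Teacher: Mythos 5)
Your proof is correct and follows the standard approach (the one the paper defers to [Z1, Lemma~A.4 and Remark~A.6]): cast graph-invariance as a Riccati equation, represent its decaying solution by Duhamel through the Sylvester propagator $\mathcal{S}(x,y):X\mapsto F_2(x,y)\,X\,F_1(y,x)$, derive the pointwise bound $\|\mathcal{S}(x,y)\|\le \exp(-\int_y^x\eta)$ via the Frobenius-norm energy identity and the spectral-gap hypothesis \eqref{gap}, and then close by contraction in a ball of radius $O(\sup\delta/\eta)$, using $\int_{-\infty}^x e^{-\int_y^x\eta}\,\delta\,dy\le\sup_{(-\infty,x]}(\delta/\eta)$. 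The derivation of the propagator estimate, the identification of the weighted quantity $\delta/\eta$ as the small parameter, and the passage of $p$-regularity through the uniformly convergent Picard iteration are all exactly as in the cited reference, so there is nothing to flag.
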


\begin{proof}
See the proof of Lemma A.4 together with Remark A.6 in \cite{Z1}.
\end{proof}

\subsection{A variable-coefficient conjugation lemma}\label{s:varcon} 

The key new technical contribution of this paper at the level of
asymptotic ODE is the following 
simple observation.
Consider a first-order system
\be\label{newsys}
W'=A^p(x;\eps)W:=M^p(x;\eps)W +\Theta^p(x;\eps)W,\quad x\le 0,
\ee
$W\in \CC^N$,  $x\in \RR$, $p\in \RR^m$,
with distinguished parameter $\eps\to 0$, satisfying
\be\label{udecay2}
|\Theta^p(x,\eps)|\le C \eps^2 e^{-\theta \eps |x|}, 
\ee
\be\label{neutral}
|\Re M^p(x,\eps)|\le \eps \delta^p(\eps) + C \eps e^{-\theta \eps |x|}
\ee
for some uniform $C,\theta>0$, all $x\le 0$, 
where $\Re M:=\frac{1}{2}(M+M^*)$.

\bl \label{varconlem}
For $\delta^p(\eps)\le \delta_*$ sufficiently small, and $\eps>0$ 
sufficiently small, there exists an invertible linear transformation
$P^p(x,\eps) =I+\Psi^p(x,\eps)$ defined on $x\le 0$
such that
\begin{equation}
\label{Pdecay2} 
| \Psi^p |\le C_1 \eps e^{- \theta \eps |x|/2}
\quad
\text{\rm for } x\le 0,
\end{equation}
and the change of coordinates $W=:P^p Z$ reduces 
$W'=A^pW$ to $Z'=M^p Z $. 
\el

\begin{proof}
Equivalently, we construct a solution $P^p$ of the (matrix-valued)
homological equation
\be\label{homolog}
P'=
\mathcal{M}^pP + \Theta^p P,
\qquad
\mathcal{M}P:=M^pP-PM^p,
\ee
satisfying $P^p\to I$ as $x\to -\infty$, or, equivalently, a 
solution $\Psi^p$ of the integral fixed-point equation
\begin{equation}
\begin{aligned}
\CalT \Psi(x) 
&=  \int^x_{-\infty} \mathcal{F}^{y\to x} \Theta (y)(I+ \Psi(y)) dy ,
\end{aligned}
\end{equation}
where $\mathcal{F}^{y\to x}$ is the solution operator of
$P'=\mathcal{M}^p P$ from $y$ to $x$.

Denoting by $(P:Q):=\Trace (P^*Q)$ the Frobenius inner product,
and $\|P\|:=(P,P)^{1/2}$ the Frobenius matrix norm, we find that
\ba\label{enest}
\frac{1}{2}\|P\|^2=\Re (P:P')= \Re (P:M^pP-PM^p)
&= (P:(\Re M^p)P-P(\Re M^p))\\
&\le 2 \|\Re M^p\|\|P\|^2,
\ea
yielding by \eqref{neutral} the bound
\be\label{growth}
\|\mathcal{F}^{y\to x}\|\le Ce^{ 2 \eps \delta^p(\eps) (x-y)}
\le Ce^{ 2 \eps \delta_* (x-y)}.
\ee 

For $ \delta_*\le \theta/4$ and $\eps>0$ sufficiently small, this implies that
$\mathcal{T}$ is a contraction on $L^\infty(-\infty, 0]$.
For, applying \eqref{udecay2}, we have
\begin{equation}\label{con}
\begin{aligned}
\left|\CalT \Psi_1 - \CalT \Psi_2 \right|_{(x)} 
&\le C\eps^2 |\Psi_1 - \Psi_2|_\infty 
\int^x_{-\infty} e^{\theta \eps (x-y)/2} e^{\theta \eps y} dy 
\le  C_1 \eps |\Psi_1 - \Psi_2|_\infty e^{-\theta \eps |x|/2},
\end{aligned}
\end{equation}
which for $\eps$ sufficiently small is less than $\frac{1}{2} |\Psi_1 - \Psi_2|_\infty$.

By iteration, we thus obtain a solution 
$\Psi \in L^\infty (-\infty, 0]$ of $\Psi = \CalT \Psi$. 
Further, taking $\Psi_1=\Psi$, $\Psi_2=0$ in \eqref{con}, we obtain,
using contraction together with 
the final inequality in \eqref{con}, that 
$ |\Psi - \CalT(0)|_{L^\infty(-\infty,x)} \le  
\frac{1}{2}|\Psi-0|_{L^\infty(-\infty,x)}, $
 yielding, as claimed,
$
 |\Psi_{L^\infty(-\infty,x)}|\le 2 |\CalT(0)_{L^\infty(-\infty,x)}|
\le 
2C_1 \eps e^{-\theta \eps |x|/2}.
$
\end{proof}

\section{The Evans--Lopatinski determinant}\label{evansZND}

We now briefly recall the linearized stability
theory of \cite{Er1,JLW,Z1,HuZ2}. 
Shifting to coordinates $\tilde x=x-st$ moving with the background
Neumann shock, write \eqref{ZND} as
$W_t + F(W)_x=R(W)$,
where
\begin{equation}\label{abcoefs}
\begin{aligned}
W:=\begin{pmatrix} \tau\\u\\E\\z \end{pmatrix}, \quad
F:=\begin{pmatrix} 
-u-s\tau\\
\Gamma e/\tau -su\\
u\Gamma e/\tau -sE\\
-sz \end{pmatrix},\quad
R:=\begin{pmatrix} 0 \\ 0 \\ qkz\phi(u)\\-kz\phi(u) \end{pmatrix}.\\
\end{aligned}
\end{equation}
To investigate solutions in the vicinity of a discontinuous 
detonation profile, we postulate existence of a single shock
discontinuity at location $X(t)$, and reduce to a fixed-boundary
problem by the change of variables $x\to x-X(t)$.
In these coordinates, the problem becomes
$W_t + (F(W) - X'(t) W)_x=R(W)$, $x\ne 0$,
with jump condition
$X'(t)[W] - [F(W)]=0$, 
$[h(x,t)]:=h(0^+,t)-h(0^-,t)$ as usual denoting jump 
across the discontinuity at $x=0$.

\subsection{Linearization/reduction to homogeneous form}\label{linearization}
In moving coordinates, $\bar W^0$ 
is a standing detonation, hence 
$(\bar W^0, \bar X)=(\bar W^0,0)$ is a steady solution of 
the nonlinear equations.
Linearizing about $(\bar W^0,0)$, we obtain the {\it linearized equations} 
$(W_t - X'(t)(\bar W^0)'(x)) + (AW)_x =EW,$
with jump condition
$X'(t)[\bar W^0] - [A W]=0$ at $x=0$,
where
$A:= (\partial/\partial W)F$, $E:= (\partial/\partial W)R$.
Computing, we have
\be \label{ae}
A= \bp 0 & -1 & 0 & 0\\
-\frac{\Gamma \bar e}{\bar \tau^2} & -\frac{\Gamma \bar u}{\bar \tau} &
\frac{\Gamma }{\bar \tau} & 0\\
-\frac{\bar u\Gamma \bar e}{\bar \tau^2} & \frac{\Gamma(\bar e- \bar u^2)}{\bar \tau} &
\frac{\Gamma \bar u }{\bar \tau} & 0\\
0 & 0 & 0 & 0\\
\ep -sI , \qquad  
E = \begin{pmatrix}0 & 0 & 0 & 0\\0 & 0 & 0 &0\\ 
0 & \frac{q k\, d\phi(\bar T)\bar u\bar z}{c} &\frac{ qk \, d\phi(\bar T)\bar z }{c}& qk\phi(\bar T)\\ 
0 & -\frac{k\, d\phi(\bar T)\bar u \bar z}{c} & -\frac{k\, d \phi(\bar T)\bar z }{c}&-k\phi(\bar T) \end{pmatrix} . 
\ee


Reversing the original transformation to linear order, following \cite{JLW},
by the change of variables $W\to W- X(t)(\bar W^0)'(x)$,
and noting that $x$-differentiation of 
the steady profile equation $F(\bar W^0)_x=R(\bar W^0)$ gives
$(A(\bar W^0)(\bar W^0)'(x))_x=E(\bar W^0)(\bar W^0)'(x)$,
we obtain modified, {\it homogeneous} interior equations
$W_t + (AW)_x =EW$
together with a modified jump condition
accounting for front dynamics of
$X'(t)[\bar W^0]-  [A \big(W+ X(t) (\bar W^0)'\big) ]=0$.

\bl\label{Alem}
For $0\le q< q_{cj}$, $A(x)=dF(\bar W(x))$ is invertible for all $x$,
with 
\be\label{Adecay}
|A(x)-A_-|\le Ce^{-\eta |x|}
\ee
for all $x\le 0$, some $C,\eta>0$, where $A_-:=dF(W_-)$.
\el

\begin{proof} Direct calculation.
(The property $\det A_-=0$ defines $q_{cj}$, marking
the boundary of existence of detonation profiles; see \cite{Z1}.)
\end{proof}

\subsection{The stability determinant}
Seeking normal mode solutions $W(x,t)=e^{\lambda t}W(x)$,
$X(t)=e^{\lambda t}X$,
$W$ bounded, of the linearized homogeneous equations,
we are led to the generalized eigenvalue equations
$(AW)' = (-\lambda I   + E)W$ for $x\ne 0$, and
 $X(\lambda[\bar W^0]-[A (\bar W^0)']) - [A W]=0$, 
where ``$\prime$'' denotes $d/dx$, or, setting $Z:=AW$, to
\be\label{eig}
Z' = GZ, \quad x\ne 0,
\ee
\begin{equation}\label{eigRH}
\begin{aligned}
X(\lambda[\bar W^0]-[A (\bar W^0)']) - [Z]=0, 
\end{aligned}
\end{equation}
with
\begin{equation}\label{G0}
G:=(-\lambda I  + E)A^{-1},
\end{equation}
where we are implicitly using the fact that $A$ is
invertible, i.e., avoiding the limiting, Chapman--Jouget case $q=q_{CJ}$.

\begin{lemma}[\cite{Er1,JLW}] \label{lem:splitting} 
For $q\ne q_{CJ}$,
on $\R \lambda >0$, the limiting $(n+1)\times (n+1)$
coefficient matrices $G_\pm:= \lim_{z\to \pm \infty} G(z)$ 
have unstable subspaces of fixed rank: full rank $n+1$ for
$G_+$ and rank $n$ for $G_-$.
Moreover, these subspaces extend analytically to $\R \lambda \le -\eta< 0$.
\end{lemma}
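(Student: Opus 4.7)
The key structural fact is that $A=\mathrm{diag}(A_{\text{gas}}(\bar\tau,\bar u,\bar e),-s)$ at every point $x$, since the reactant flux $-sz$ decouples from the gas-dynamic fluxes. I would first identify the spectrum of $A_\pm$: the gas-dynamic block contributes the Lagrangian characteristics $\bar u-s$ and $\bar u-s\pm c_L$ with $c_L^2=\Gamma\bar e/\bar\tau^2$, plus a fourth eigenvalue $-s$ from the $z$-block. The right-going strong-detonation / Lax 3-shock inequalities then force $A_+$ to have four strictly negative eigenvalues (supersonic ahead) and $A_-$ to have three negative and one positive eigenvalue (subsonic behind). The assumption $q\neq q_{cj}$ is precisely the nondegeneracy $\det A_-\neq 0$, since $q=q_{cj}$ corresponds to sonic tangency of the fastest gas-dynamic characteristic at the burned state.

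Next, eigenvalues $\mu$ of $G_\pm=(-\lambda I+E_\pm)A_\pm^{-1}$ satisfy the dispersion relation $\det(\lambda I-E_\pm+\mu A_\pm)=0$, with high-frequency asymptotics $\mu\sim -\lambda/\alpha_j^\pm$ as $|\lambda|\to\infty$, where $\alpha_j^\pm$ are the eigenvalues of $A_\pm$. Combined with the sign pattern just established, this gives, for $|\lambda|$ large and $\Re\lambda>0$, four roots $\mu$ with $\Re\mu>0$ at $+\infty$ and three at $-\infty$, matching the advertised ranks $n+1$ and $n$. I would then propagate the count to all of $\{\Re\lambda>0\}$ by connectedness: the rank is locally constant unless some eigenvalue of $G_\pm$ crosses the imaginary axis, equivalently unless $\lambda$ is an eigenvalue of $E_\pm-i\xi A_\pm$ for some $\xi\in\R$. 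At $-\infty$ this is immediate: $\bar z=0$ collapses $E_-$ to a matrix supported only in its fourth column, so $E_--i\xi A_-$ is block upper-triangular with spectrum $\{-i\xi\alpha_j^-\}\cup\{-k\phi(T_-)+i\xi\}\subset\{\Re\lambda\le 0\}$.

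The main obstacle is the corresponding no-crossing statement at $+\infty$, where $E_+$ is nontrivial. Here I would exploit that $E_+$ is rank one, $E_+=uv^T$ with $u,v$ supported in the energy/reactant coordinates, and apply the matrix determinant lemma
\[
\det(E_+-i\xi A_+-\lambda I)=\det(-i\xi A_+-\lambda I)\,\bigl(1+v^T(-i\xi A_+-\lambda I)^{-1}u\bigr),
\]
reducing the crossing condition to a single scalar meromorphic equation in $\lambda$; using the block-diagonal form of $A_+$ and the explicit entries of $u,v$, one verifies that the resulting rational function has no zero with $\Re\lambda>0$.

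Once no-crossing on $\{\Re\lambda>0\}$ is established, analytic extension to $\{\Re\lambda\ge -\eta\}$ for some $\eta>0$ follows by a standard spectral-projection argument: the eigenvalues of $G_\pm$ are algebraic in $\lambda$, and the unstable eigenvalue group remains isolated from the stable group in a uniform neighborhood of the imaginary axis (at $-\infty$, for instance, the gas-dynamic spectrum $\{-\lambda/\alpha_j^-\}$ and the reactive eigenvalue $\lambda+k\phi(T_-)$ are separated on the strip $|\Re\lambda|<k\phi(T_-)/2$). The associated Riesz projector is then holomorphic on this enlarged domain, and its range provides the sought-after analytic continuation of the unstable subspace.
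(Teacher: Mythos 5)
The paper's proof is a one-line direct computation: because $A$ is block-diagonal (gas block $\alpha-sI$ plus scalar block $-s$) and $E_\pm$ is either zero or supported in a single off-diagonal column, the matrices $G_\pm=(-\lambda I+E_\pm)A_\pm^{-1}$ are \emph{block upper-triangular}, so their eigenvalues are read off immediately as $\{-\lambda/a_j^\pm\}_{j=1}^{3}$ together with $\lambda/s$ at $+\infty$, resp.\ $(\lambda+k\phi(T_-))/s$ at $-\infty$; the Lax 3-shock sign pattern of the $a_j^\pm$ then gives the counts $n+1$ and $n$ exactly (not just asymptotically), and analyticity of the invariant subspaces across $\Re\lambda=0$ follows from the uniform spectral gap visible in these explicit formulas. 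Your proposal takes a genuinely different, homotopy-based route (large-$|\lambda|$ asymptotics $\mu\sim-\lambda/a_j^\pm$ plus a no-crossing continuation), and while this is logically sound as a strategy, it both misses the structural shortcut and contains a real gap: the ``main obstacle'' at $+\infty$ is resolved only by the unverified assertion that a certain scalar rational function built from the rank-one reduction ``has no zero with $\Re\lambda>0$''. In fact this obstacle is spurious. The ZND profile is \emph{constant} for $x\ge 0$, which forces the chemistry to be frozen ahead of the Neumann shock ($R(\bar W_+)=0$, hence effectively $E_+=0$ for the linearization on $x>0$); thus $G_+=-\lambda A_+^{-1}$ is block-diagonal and its spectrum $\{-\lambda/a_j^+\}\cup\{\lambda/s\}$ is trivially in $\{\Re\mu>0\}$ when $\Re\lambda>0$, no matrix-determinant lemma needed. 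Your treatment of the $-\infty$ side (collapse of $E_-$ to a single column since $\bar z\to0$, giving block upper-triangular $E_--i\xi A_-$ with spectrum $\sigma(-i\xi(\alpha_--sI))\cup\{-k\phi(T_-)+i\xi s\}\subset\{\Re\le0\}$) is correct, and is in fact the same triangular structure the paper exploits---so the cleaner move is to apply it directly to $G_-$ rather than via the Hersh/Lopatinski crossing set. Your final paragraph on analytic continuation of the unstable eigenprojections via the uniform spectral separation near $\Re\lambda=0$ is fine and matches the intent of the paper's last sentence.
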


\begin{proof}
Straightforward calculation using upper-triangular form
 of $G_\pm$ \cite{Er1,Er2,Z1,JLW}. 
\end{proof}

\begin{corollary}[\cite{Z1,JLW}] \label{cor:splitting} 
For $q\ne q_{cj}$, On $\R \lambda >0$, the only bounded solution of \eqref{eig}
for $x>0$ is the trivial solution
 $W\equiv 0$.  For $x<0$, the bounded solutions consist of
an $(n)$-dimensional
manifold $\Span \{Z_1^+, \dots, Z_{n}^+\}(\lambda,x)$
of exponentially decaying solutions, analytic in $\lambda$ and
continuous in parameters $(e_+, q, \mathcal{E}, \Gamma)$, and
tangent as $x\to -\infty$ to the subspace of exponentially decaying
solutions of the limiting, constant-coefficient equations
$Z'=G_-Z$; moreover, 
this manifold extends analytically to $\R \lambda \le -\eta< 0$.
\end{corollary}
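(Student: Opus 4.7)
The plan is to treat the two sides of the Neumann shock separately. On $x > 0$ the background profile is constant, so the result reduces immediately to Lemma \ref{lem:splitting}. On $x < 0$ the idea is to apply the conjugation lemma (Lemma \ref{conjlem}) to reduce the variable-coefficient system \eqref{eig} to the limiting constant-coefficient system $Z' = G_- Z$, and then read off the $n$-dimensional bounded manifold from the unstable subspace structure of $G_-$ provided by Lemma \ref{lem:splitting}.

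For $x > 0$, the profile $\bar W$ is identically $(u_+, 1)$, so $G(x) \equiv G_+$ is constant. By Lemma \ref{lem:splitting}, on $\Re \lambda > 0$ the unstable subspace of $G_+$ has full rank $n+1$, i.e., $\sigma(G_+)$ lies entirely in the open right half-plane; hence every nontrivial solution of $Z' = G_+ Z$ grows exponentially as $x \to +\infty$, and only $Z \equiv 0$ is bounded. For $x < 0$, combining Lemma \ref{Alem} with the exponential decay of $\bar W - W_-$ (which governs $E$) yields $|G(x) - G_-| \le C e^{-\eta|x|}$, uniformly on compact $(\lambda, p)$-sets, so Lemma \ref{conjlem} furnishes $P(x, \lambda) = I + \Psi(x, \lambda)$, analytic in $\lambda$, continuous in the parameters $p = (e_+, q, \mathcal{E}, \Gamma)$, with $|\Psi| \le C_1 e^{-\bar\theta|x|}$, such that the substitution $Z = P \tilde Z$ reduces \eqref{eig} to $\tilde Z' = G_- \tilde Z$ on $x \le 0$.

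The unstable subspace of $G_-$ being $n$-dimensional and admitting (by Lemma \ref{lem:splitting}) an analytic basis $r_1^-(\lambda), \dots, r_n^-(\lambda)$ on $\Re \lambda > 0$, setting
\[
Z_j^+(\lambda, x) := P(x, \lambda)\, e^{G_- x} r_j^-(\lambda), \qquad j = 1, \dots, n,
\]
we obtain $n$ linearly independent solutions of \eqref{eig} that decay exponentially as $x \to -\infty$ (since $P \to I$ and $e^{G_- x} r_j^-$ decays), that are analytic in $\lambda$ and continuous in $p$, and that are tangent as $x \to -\infty$ to the decaying modes of $Z' = G_- Z$. Any bounded solution on $x \le 0$ must lie in their span, since in the $\tilde Z$ coordinates the unstable subspace of $G_-$ is exactly the set of bounded (equivalently, exponentially decaying) solutions of the constant-coefficient equation. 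Analytic extension to $\Re \lambda \le -\eta < 0$ follows from the analytic extension of the unstable subspace of $G_-$ provided by Lemma \ref{lem:splitting}, since the decay rate in Lemma \ref{Alem} is $\lambda$-independent and Lemma \ref{conjlem} applies uniformly in a neighborhood of any such $\lambda$. The argument is essentially mechanical once the three preceding lemmas are in hand; the only point requiring care is the identification of \emph{bounded} with \emph{exponentially decaying} solutions as $x \to -\infty$, which uses that on $\Re \lambda > 0$ no eigenvalue of $G_-$ lies on the imaginary axis, so the $O(e^{-\bar\theta|x|})$ correction from $P$ preserves the strict exponential decay inherited from the constant-coefficient modes.
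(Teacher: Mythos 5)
Your argument matches the paper's proof in both structure and ingredients: for $x>0$ you use constancy of $G\equiv G_+$ together with the full-rank unstable subspace from Lemma \ref{lem:splitting}, and for $x<0$ you combine the exponential decay $|G-G_-|\le Ce^{-\eta|x|}$ (from Lemma \ref{Alem}) with the conjugation lemma (Lemma \ref{conjlem}) to reduce to the constant-coefficient system $\tilde Z'=G_-\tilde Z$ and read off the $n$-dimensional manifold from the unstable subspace of $G_-$, with analyticity, parameter continuity, and extension past the imaginary axis all inherited from the constituent lemmas. The paper's proof is terser but invokes exactly these same ingredients, so your proposal is correct and follows the same route.
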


\begin{proof}
The first observation is immediate, using the fact that $G$
is constant for $x>0$, with eigenvalues of positive real part.
The second follows from standard asymptotic ODE theory, 
Lemma \ref{conjlem}, Appendix \ref{asymptotic},
together with the fact that $G$, by Lemma \ref{Adecay}, decays exponentially
to its limit $G_-:=G(-\infty)$ as $x\to -\infty$, and that $G_-$ by
direct calculation has a single eigenvalue of negative real part for
$\Re \lambda>0$, which extends analytically to $\Re \lambda=0$ (by
spectral separation from the remaining spectra of $G_-$) and continuously
in $(e_+, q, \mathcal{E}, \Gamma)$.
\end{proof}

\begin{definition}\label{lopdef}
We define the {\it Evans--Lopatinski determinant}
\begin{equation}\label{evanseq}
\begin{aligned}
D_{ZND}(\lambda)&:=
\det \begin{pmatrix}
Z_1^-(\lambda,0), & \cdots, & Z_{n}^-(\lambda,0), &
\lambda [\bar W^0]-[A (\bar W^0)']\\
\end{pmatrix}\\
&=\det \begin{pmatrix}
Z_1^-(\lambda,0), & \cdots, & Z_{n}^-(\lambda,0), &
\lambda[\bar W^0]+R(\bar W^0)(0^-) \end{pmatrix},
\end{aligned}
\end{equation}
where $Z^-_j(\lambda,x)$ are as in Corollary \ref{cor:splitting}.
\end{definition}

The analytic function $D_{ZND}$ is exactly the {\it stability function} 
derived in a different form by Erpenbeck \cite{Er1,Er2}.
Evidently (by \eqref{eigRH} combined with Corollary \ref{cor:splitting}), 
$\lambda$ is a generalized eigenvalue/normal mode
for $\R \lambda \ge 0$ if and only if $D_{ZND}(\lambda)=0$.

By duality, the zeros of $D_{ZND}$ agree
with those of the ajoint formulation
\begin{equation}\label{adevanseq}
\tilde D_{ZND}(\lambda)=
\langle \tilde Z,  \lambda[\bar W^0]+R \rangle|_{x=0},
\end{equation}
where $\langle \cdot, \cdot \rangle$ denotes complex inner product
and $\tilde Z$ denotes an analytically chosen solution of
$\tilde Z' = -G^* \tilde Z$, $x\le 0$
that is decaying as $x\to -\infty$ (see \cite{HuZ1,CJLW,Z1}). 

\begin{definition}\label{stabdef}
For $q\ne q_{cj}$ a ZND detonation is spectrally (or ``normal modes'') 
stable if the only zero of $D_{ZND}$ (equivalently of $\tilde D_{ZND}$)
on $\Re \lambda \ge 0$ is a single 
zero of mulitplicity one at $\lambda=0$ (necessarily
at least multiplicity one by translational invariance).
\end{definition}

\section{Continuous dependence and bounded-frequency stability}

\bpr
For $q\ne q_{cj}$, $D_{ZND}$ and $\tilde D_{ZND}$ are analytic in
$\lambda$ and vary continuously in $(e_+, q, \mathcal{E}, \Gamma)$,
uniformly on compact subsets of $\{\Re \lambda \ge 0\}$.
\epr

\begin{proof} 
Immediate, by the construction of the previous section.  
\end{proof}

\bc\label{qcor}
Under the parametrization \eqref{param},
ZND detonations are stable with respect
to bounded frequencies $\{\Re \lambda \ge 0\}\cap \{|\lambda|\le R\}$,\footnote{
That is, $D_{ZND}$ (resp. $\tilde D_{ZND}$) vanishes on this set only
at a zero of multiplicity one at $\lambda=0$.} 
any $R>0$, in the small heat release limit $q\le q_*$ sufficiently small
for $\Gamma$, $\mathcal{E}$, $e_+$ bounded and some $q_*>0$.
\ec

\begin{proof} 
By continuity with respect to $q$, and the properties of uniform limits
of analytic functions, the zeros of $\tilde D_{ZND}$ on 
$\{\Re \lambda \ge 0\}\cap \{|\lambda|\le R\}$ converge as $q\to 0$
to the zeros of $\tilde D_{ZND}$ with $q=0$.
Noting that the $u$ and $z$ equations decouple for $q=0$, and
the profile $\bar u$ reduces to a gas-dynamical shock,
we find readily from \eqref{adevanseq} that $\tilde D_{ZND}$
reduces to the Lopatinski determinant for a gas-dynamical shock,
which is known (see \cite{Er4,M}) to vanish only at a zero of multiplicity one
at $\lambda=0$. 
\end{proof}

\bc\label{fcor}
Under the parametrization \eqref{param},
ZND detonations are stable with respect
to bounded frequencies $\{\Re \lambda \ge 0\}\cap \{|\lambda|\le R\}$,
any $R>0$, in Erpenbeck's high-overdrive limit, or, in our scaling,
$q,\mathcal{E},e_+\to 0$.
\ec

\begin{proof}
Immediate, by Corollary \ref{qcor} and the description of Section \ref{s:f}.
\end{proof}

\section{High-frequency asymptotics and large-frequency stability}\label{s:HF}

\subsection{Approximate diagonalization}\label{s:diag}
For $q$ bounded away from $q_{cj}$, $k=1$, $\Gamma$ bounded,
suppressing parameters $p=(\Gamma, \mathcal{E},q,e_+,\hat \lambda, \eps)$, 
rescale $x\to x|\lambda|$, converting the adjoint eigenvalue ODE
$\tilde Z'=-G^* Z$ to
$$
\dot Z=B(\eps x)Z+\eps C(\eps x)Z,
$$
where $\eps:=|\lambda|^{-1}$, 
$\hat \lambda:=\frac{\lambda}{|\lambda|}$,
and $B=\bar{\hat \lambda} A^{-1,T}$, $C=-(EA^{-1})^T$, with $A$ and $E$ as in \eqref{ae}.

Noting that $A=\bp \alpha & 0\\ 0 & 0\ep -I$, where $\alpha$ is the
flux Jacobian for ideal gas dynamics, we find from standard gas-dynamical
computations that $A$ has distinct eigenvalues
$-1,-1, -1 \pm c$, where $c:=\frac{\sqrt{\Gamma (\Gamma +1)}}{\tau}$ 
denotes sound speed, and, for $0\le q<q_j$, these eigenvalues are
uniformly bounded away from zero as well as from each other.
By standard matrix perturbation theory \cite{K}, there thus exists
a smooth coordinate transformation $T=T(B)$ such that $T^{-1} B T$
is diagonal.
Making the change of coordinates $Z=TY$,
we thus obtain 
\be\label{Yeq}
\dot Y= T^{-1}(B+\eps C)TY -T^{-1}\dot T  Y
=: B_1Y + \eps C_1 Y,
\ee
where 
\be\label{B1}
B_1:=T^{-1}BT=
\bar {\hat \lambda} \bp -1+c & 0 & 0 & 0\\
0& -1-c & 0 & 0 \\
0& 0 & -1 &  0 \\
0& 0& 0 & -1  \\
\ep
\ee
and, 
since 
$\dot T=(\partial T /\partial B) \eps (\partial B/\partial x)
=O(\eps e^{-\theta |x|})$
and $|E-E(-\infty)|\le e^{-\theta | x|}$ for $ x\le 0$, $\theta>0$,
 \be\label{C1}
C_1:= T^{-1}CT -\eps^{-1}T^{-1}\dot T = 
( T^{-1}CT)(-\infty)+
O(e^{-\theta \eps |\hat x|}).
\ee

By block structure of $A$, we may take without loss
of generality  $T=\bp * & *\\ * &I_2\ep$, whence
$( T^{-1}CT)(-\infty)= \bp 0 & 0\\ 0 &E_{22}^T\ep$, where
$E_{22}=\bp qk\\-k\ep (d\phi(\bar T) \bar z  , \phi(\bar T))$
is rank one but (for large $\E$, in particular) not always diagonalizable.
Nonetheless, at $\hat x=-\infty$, we have
$$
E_{22}(-\infty)=
\bp qk\\-k\ep (0  , \phi(T_-)) =
\bp 0 & qk\phi(T_-)\\
0& -k\phi(T_-)\ep
$$
diagonalizable, so, by a further modification of $T_{22}$, we may
take without loss of generality
\be\label{finalE22}
( T^{-1}CT)_{22}(-\infty)=\bp 0 & 0\\ 0 & -k\phi(T_-)\ep
\ee
and $\eps( T^{-1}CT)_{22}=\eps \bp 0 & 0\\ 0 & -k\phi(T_-)\ep 
+O( \eps e^{-\theta |x|})$.
Converting briefly back to $x$-coordinates and applying Lemma \ref{conjlem},
we thus find that there is a nonsingular coordinate transformation
$W=SX$, $|S|,|S^{-1}|\le C$, converting
$\dot W= \eps( T^{-1}CT)_{22}W$ to the constant-coefficient equation
$\dot X=\eps \bp 0 & 0\\ 0 & -k\phi(T_-)\ep X$. 
Incorporating this further coordinate change in the $2$-$2$ block
(only), we obtain, finally 
\be\label{Xeq}
\dot X= B_2 X + \eps C_2 X,
\ee
\be\label{B2}
B_2=
\bar {\hat \lambda} \bp -1+c & 0 & 0 & 0\\
0& -1-c & 0 & 0 \\
0& 0 & -1 &  0 \\
0& 0& 0 & -1-\eps k\phi(T_-)/\bar {\hat \lambda}  \\
\ep,
\;
C_2=
\bp
O(e^{-\theta \eps |\hat x|}& O(e^{-\theta \eps |\hat x|}\\
O(e^{-\theta \eps |\hat x|}& 0_2
\ep,
\ee
where $O_2$ denotes the $2\times 2$ zero matrix.

\br
The reduction just performed, using the conjugation lemma to
diagonalize the lower righthand block, is a delicate
point of the analysis, avoiding potential difficulties associated with
turning points where $E_{22}$ becomes nondiagonalizable.\footnote{
Unimportant for $|\lambda|>>1+\mathcal{E}$, these dominate
behavior in the high-activation energy limit $\mathcal{E}\to \infty$
\cite{BZ,Z5}.}
\er

Using again standard matrix perturbation theory, it follows for
$\eps$ sufficiently small that there
is a further smooth coordinate transformation 
$$
Q=\bp
1&  O(\eps e^{-\theta \eps |\hat x|}) &  O(\eps e^{-\theta \eps |\hat x|})
&  O(\eps e^{-\theta \eps |\hat x|}) \\
O(\eps e^{-\theta \eps |\hat x|}) & 1& O(\eps e^{-\theta \eps |\hat x|}) 
&  O(\eps e^{-\theta \eps |\hat x|})\\
O(\eps e^{-\theta \eps |\hat x|}) &  O(\eps e^{-\theta \eps |\hat x|}) & 
1& 0 \\
O(\eps e^{-\theta \eps |\hat x|}) &  O(\eps e^{-\theta \eps |\hat x|}) & 
0 & 1\ep
$$
such that $M:=Q^{-1}(B_2+\eps C_2) Q$ is block-diagonal,
\be\label{M}
M=
\bp \bar{\hat \lambda}(-1+c) +O(\eps e^{-\theta \eps |\hat x|})& 0 & 0 &0 \\
0 & \bar{\hat \lambda}(-1-c) +O(\eps e^{-\theta \eps |\hat x|})& 0  &0\\
0 & 0 &
-\bar {\hat \lambda} & 0\\ 
0 & 0 & 0 &-\bar{\hat \lambda}-\eps k\phi(T_-)\ep,
\ee
and $\Theta:= - Q^{-1}\dot Q=O(\eps^2e^{-\theta \eps |\hat x|})$, 
taking the equations to the approximate block-diagonal form
treated in Lemmas \ref{reduction} and \ref{varconlem}, of
\be\label{finaldiag}
 W'= M W + \Theta W .
\ee

\subsection{High-frequency stability}\label{s:HFstab}

\bpr\label{hfstabprop}
For $k=1$, $\Gamma$, $\mathcal{E}$ bounded, and $q$ bounded away from $q_{cj}$,
ZND detonations are stable with respect to sufficiently
high frequencies; that is, there exists $R>0$ independent of 
$(\Gamma,\mathcal{E},q,e_+)$ 
such that $D_{ZND}(\lambda) \ne 0$ for $ \Re \lambda \ge 0$ 
and $ |\lambda|\ge R$.
\epr

\begin{proof}
{\it Case (i)} ($\Re \lambda << \eps |\lambda|$)
Equivalently, $\Re \hat \lambda << \eps $, whence, in \eqref{M},
there is a spectral gap between the fourth diagonal entry, 
$-\bar{\hat \lambda}-\eps k\phi(T_-)$, which has real
part $\le -\eps \eta$ for $\eta=\phi(T-)>0$, and the first three
diagonal entries, $\bar{\hat \lambda }(-1\pm c)$ and
$-\bar {\hat \lambda} $, which have real parts $\ge -C\Re \bar {\hat \lambda}
>> - \eps$.

Applying Lemma \ref{reduction} to \eqref{finaldiag}, we find that there
is a graph $W_4=\Phi(W_1,W_2,W_3)$, $\Phi= O(\eps)$ that is invariant
under \eqref{finaldiag}, from which we may reduce to an equation
 \be\label{checkeq}
\check W'= \check M \check W + \check \Theta \check W 
\ee
on $\check W=(W_1,W_3,W_3)$ alone, with
\be\label{checkM}
\check M=
\bp \bar{\hat \lambda}(-1+c) +O(\eps e^{-\theta \eps |\hat x|})& 0 & 0  \\
0 & \bar{\hat \lambda}(-1-c) +O(\eps e^{-\theta \eps |\hat x|})& 0  \\
0 & 0 & -\bar {\hat \lambda} \\ 
\ep
\ee
satisfying \eqref{neutral} with $\delta_*<<\eps$
and $\check \Theta= O(\eps^2e^{-\theta \eps |\hat x|})$.

Applying now
Lemma \ref{varconlem} to \eqref{checkeq}, we find that there
is a coordinate transformation $\check W=PX$, $P=I+O(\eps)$, taking
\eqref{checkeq} to the block-decoupled equation $\dot X=\check MX$,
of which the unique (up to constant multiplier)
solution $X^-$ decaying as $\hat x\to -\infty$
has value $X^-(0)$ at $\hat x=0$ parallel to $(1,0,0,)^T$.
Untangling coordinate changes, we find that 
unique (up to constant multiplier)
solution $W^-$ of \eqref{finaldiag} decaying as $\hat x\to -\infty$
is parallel to $I+O(\eps)$ times $(1,0,0,0)^T$ and
the unique (up to constant multiplier) solution
 $\tilde Z^-(0)$ of 
adjoint eigenvalue equation $\tilde Z'=-G^* Z$ is
parallel to $I+O(\eps)$ times the left unstable eigenvector of 
$A^{-1}(0)$, or $(\ell, 0)^T$, where $\ell$ is the left unstable
eigenvector of the gas-dynamical flux Jacobian $\alpha(0)$.

Likewise, $ \lambda[\bar {W}]+R(\bar W(0^-))$ is parallel to
$I+O(\eps)$ times $[\bar W]= ([u],0)^T$, whence, combining these
facts, we find that $\hat D_{ZND}(\lambda)=
\tilde Z^-(0)\cdot ( \lambda[\bar {W}]+R(\bar W(0^-)))$ is proportional
to $1+O(\eps)$ times $\Delta(\lambda):=\ell \cdot \lambda [u]$, which
may be recognized as the Lopatinski determinant for an ideal gas-dynamical
shock, known by \cite{Er4,M} to be nonvanishing on
$\Re \lambda \ge 0$ except at $\lambda=0$, with all constants uniform
in model parameters and $\Re \lambda \ge 0$.
For $|\lambda|$ sufficiently large, therefore, or equivalently,
$\eps:=|\lambda|^{-1}$ sufficiently small, we find that 
$\hat D_{ZND}(\lambda)\ne 0$.

{\it Case (ii)} ($\Re \lambda \ge C^{-1} \eps |\lambda|$)
Equivalently, $\Re \hat \lambda \ge C^{-1}\eps $, whence, applying
Lemma \ref{reduction} to \eqref{finaldiag}, we find that there
is a graph $(W_2,W_3,W_4)=\Phi(W_1)$, $\Phi= O(\eps)$ that is invariant
under \eqref{finaldiag}, from which we find that the unique
unique (up to constant multiplier) solution $W^-$ of \eqref{finaldiag}
decaying as $\hat x\to -\infty$
has value $W^-(0)$ at $\hat x=0$ parallel to $I+O(\eps)$
times $(1,0,0,0)^*$.
Untangling coordinate changes, and arguing as in the previous case,
we thus find again that $\hat D_{ZND}(\lambda)=
\tilde Z^-(0)\cdot ( \lambda[\bar {W}]+R(\bar W(0^-)))$ is proportional
to $1+O(\eps)$ times the gas-dynamical Lopatinski determinant
$\Delta(\lambda):=\ell \cdot \lambda [u]$,
$\ell \cdot [u] \ne 0$, 
hence nonvanishing for $|\lambda|$ sufficiently large.
\end{proof}

\br\label{mult}
\textup{
Applying Lemmas \ref{reduction} and \ref{varconlem} in 
sequence in this way, 
one may treat the situation arising in the multi-dimensional case 
(see \cite{Er3}) of an approximately block-diagonal system for 
which some blocks have a uniform spectral gap and others have uniformly 
small spectral gap.
We hope to report on this in future work.
}
\er

\br\label{fullconj}
\textup{
Though we did state it, the arguments above show that there
exists a change of coordinates $Q=I+O(\eps e^{-\theta \eps |\hat x|})$
taking \eqref{finaldiag} to exactly diagonal form
$W'=MW$, where $M$ is as in \eqref{M}.\footnote{
This follows by separating off scalar diagonal entries with spectral
gap from other entries using Lemma \ref{reduction}, to obtain
scalar equations $w'=mw+\theta w$ with 
$|\theta|=O(\eps^2 e^{-\theta \eps|\hat x|})$ for which error $\theta$
can be shown to be negligible by explicit exponentiation.
(Conjugation of the nonscalar block \eqref{checkeq} has already been shown.)}
This gives information about the full flow, and not only the decaying
solution important for the stability theory.
Note that, in the exactly diagonal coordinates $W$, the first
two entries correspond to coefficients of the first two eigenvectors
of $A$ in the eigenexpansion of $\tilde Z$, while the second two
entries correspond to unknown linear combinations of the the
coefficients of the third and fourth eigenvectors of $A$.\footnote{
Recall, these depend on the abstract conjugation prescribed in
going from \eqref{B1} to \eqref{B2}.}
}

\textup{
Likewise, a closer look at the proof reveals the asymptotic description
\be\label{asymptotics}
D_{ZND}(\lambda)=  e^{C_1 \lambda +C_0}\Delta(\lambda) (1+O(\eps)),
\ee 
where $\Delta(\lambda):=\ell \cdot \lambda [u]$, $\ell \cdot [u]\ne 0$
is the Lopatinski determinant associated with the Neumann
shock.
This can be used as in \cite{HLyZ1} as the basis of a convergence study,
to obtain practical bounds on unstable eigenvalues.
Higher order approximants
$$
D_{ZND}(\lambda)=  e^{C_1 \lambda +C_0 + C_{-1}\lambda^{-1}}\Delta(\lambda) (1+D_1\eps + O(\eps^2)),
$$
etc., may be obtained by further diagonalizations as detailed in \cite{MaZ3}.
}
\er

\bc\label{main}
Under the parametrization \eqref{param},
ZND detonations are stable in both the small-heat release and
(with Erpenbeck's scaling, Section \ref{s:f}) high-overdrive limits.
\ec

\medskip
{\bf Acknowledgement.}
Thanks to the University of Paris 13 for their hospitality
during a visit in which this work was partly carried out.
Thanks to Olivier Lafitte, Benjamin Texier, and Mark Williams
for stimulating discussions
regarding stability of ZND detonations.



\appendix
\section{Numerical implementation}
For purpose of numerical applications, we provide also
a simpler high-frequency argument requiring the weaker estimate
\be\label{replaced}
\Phi=\bp
O(\eps e^{-\theta \eps |\hat x|}) & O(\eps^2 e^{-\theta \eps^2 |\hat x|}) & 
 O(\eps^2 e^{-\theta \eps |\hat x|}) & O(\eps^2 e^{-\theta \eps^2 |\hat x|}) \\
O(\eps^2 e^{-\theta \eps |\hat x|}) & O(\eps e^{-\theta \eps |\hat x|}) & 
 O(\eps e^{-\theta \eps |\hat x|}) & O(\eps e^{-\theta \eps |\hat x|}) \\
O(\eps^2 e^{-\theta \eps |\hat x|}) & O(\eps e^{-\theta \eps |\hat x|}) & 
 O(\eps e^{-\theta \eps |\hat x|}) & O(\eps e^{-\theta \eps |\hat x|}) \\
O(\eps^2 e^{-\theta \eps |\hat x|}) & O(\eps e^{-\theta \eps |\hat x|}) & 
 O(\eps e^{-\theta \eps |\hat x|}) & O(\eps e^{-\theta \eps |\hat x|}) \\
\ep
\ee
rather than $\Theta= O(\eps^2 e^{-\theta \eps |\hat x|})$
in \eqref{finaldiag}, removing the need for the intermediate
coordinate transformation $S$ in the lower righthand $2\times 2$ block.
This avoids an abstract conjugation step that is difficult to
estimate efficiently numerically, and also provides a slightly
simpler proof treating all frequencies at once instead of dividing
into cases.
On the other hand, it provides information only about the single
mode $\tilde Z^-$ associated with the stability determinant, and
not the entire flow of the adjoint eigenvalue ODE, which may be
of interest in more general situations.

\subsection{Variable-coefficient gap lemma}\label{s:vargap} 

Consider a first-order system \eqref{newsys} satisfying
\eqref{udecay2} and
\be\label{neutral2}
\Re M^p(x,\eps)\le \eps \delta^p(\eps) + C \eps e^{-\theta \eps |x|}
\ee
for some uniform $C,\theta>0$, all $x\le 0$, 
$\Re M:=\frac{1}{2}(M+M^*)$.
(Note, in contrast with \eqref{neutral}, 
that this is a bound from above only.)
Assume, further, that there exists a smooth vector $V_*^p(x)$,
$1/C\le|V_*^p|\le C$,
for which $M^pV^p_*\equiv 0$.

\bl \label{vargaplem}
Assuming \eqref{neutral2},
for $\delta^p(\eps)\le \delta_*$ sufficiently small, and $\eps>0$ 
sufficiently small, there exists a solution
$V^p(x,\eps)$ of \eqref{newsys} defined on $x\le 0$ such that
\begin{equation}
\label{Pdecay2new} 
|( V^p-V^p_*)(x)|  \le C_1 \eps e^{- \theta \eps |x|/2}|V_*^p|
\quad
\text{\rm for } x\le 0.
\end{equation}
\el

\begin{proof}
We seek, equivalently, a 
solution $V^p$ of the integral fixed-point equation
\begin{equation}
\begin{aligned}
\CalT V(x) 
&= V^p_*(x)+  \int^x_{-\infty} \mathcal{F}^{y\to x} \Theta (y)V(y) dy ,
\end{aligned}
\end{equation}
where $\mathcal{F}^{y\to x}$ is the solution operator of
$V'=M^p V$ from $y$ to $x$.
From \eqref{neutral2}, we obtain by an easy energy estimate like
that of \eqref{enest} the bound
\be\label{growth2}
\|\mathcal{F}^{y\to x}\|\le Ce^{  \eps \delta^p(\eps) (x-y)}
\le Ce^{  \eps \delta_* (x-y)}
\; \hbox{ \rm for } \; x>y.
\ee 

For $ \delta_*\le \theta/2$ and $\eps>0$ sufficiently small, this implies that
$\mathcal{T}$ is a contraction on $L^\infty(-\infty, 0]$.
For, applying \eqref{udecay2} and \eqref{growth2}, we have
\begin{equation}\label{con2}
\begin{aligned}
\left|\CalT V_1 - \CalT V_2 \right|_{(x)} 
&\le C\eps^2 |V_1 - V_2|_\infty 
\int^x_{-\infty} e^{\theta \eps (x-y)/2} e^{\theta \eps y} dy 
\le  C_1 \eps |V_1 - V_2|_\infty e^{-\theta \eps |x|/2},
\end{aligned}
\end{equation}
which for $\eps$ sufficiently small is less than $\frac{1}{2} |V_1 - V_2|_\infty$.
By iteration, we thus obtain a solution 
$V \in L^\infty (-\infty, 0]$ of $V = \CalT V$. 
Further, taking $V_1=V$, $V_2=0$ in \eqref{con}, we obtain,
using contraction together with 
the final inequality in \eqref{con2}, that 
$ 
|V- V_*|_{L^\infty(-\infty,x)}=
|V- \CalT(0)|_{L^\infty(-\infty,x)} \le  
\frac{1}{2}|V-0|_{L^\infty(-\infty,x)}, $
 yielding \eqref{Pdecay2new} as claimed.
\end{proof}

\subsection{Alternate high-frequency analysis}

\begin{proof}[Alternate proof of Prop. \ref{hfstabprop}]
By inspection, $\M:=(M+\check \Theta)- (M+\check\Theta)_{11}I$
satisfies \eqref{neutral2} for $M$ as in \eqref{M},
$\eps:=|\lambda|^{-1}$,
and 
$$
\check \Theta=\bp
\Theta_{11}& 0&0&0\\
0& \Theta_{22}& \Theta_{23}& \Theta_{24}\\
0& \Theta_{32}& \Theta_{33}& \Theta_{34}\\
0& \Theta_{42}& \Theta_{43}& \Theta_{44}\\
\ep
$$
with $\Theta$ as in \eqref{replaced}, 
with $\M V_*\equiv 0$ for $V_*:=(1,0,0,0)^T$, whence, applying Lemma
\ref{vargaplem}, we obtain a decaying solution 
$W(x;\eps)=\omega(x)e^{\bar{\hat \lambda}(-1+c)\hat x}V(x;\eps)$ of
\eqref{finaldiag} converging as $O(\eps)$ in relative error
to $(1,0,0,0)^T$, where 
$\omega(\hat x)=e^{\int_{-\infty}^{\hat x} O(\eps e^{-\theta \eps
|\hat y|}d\hat y}>0$ is uniformly bounded above and below.
Untangling coordinate changes, and 
noting that $ \lambda[\bar {W}]+R(\bar W(0^-))$ is parallel to
$I+O(\eps)$ times $[\bar W]= ([u],0)^T$, 
we thus find that $\hat D_{ZND}(\lambda)=
\tilde Z^-(0)\cdot ( \lambda[\bar {W}]+R(\bar W(0^-)))$ is proportional
to $1+O(\eps)$ times the gas-dynamical Lopatinski determinant
$\Delta(\lambda):=\ell \cdot \lambda [u]$,
$\ell \cdot [u] \ne 0$, 
hence nonvanishing for $|\lambda|$ sufficiently large.
\end{proof}

\end{document}